\newcommand{\R}{{\mathbb R}}
\def \d {\mathrm{d}}
\def \Cp {\mathrm{Cap}}
\def \ep {\varepsilon}
\newtheorem{thm}{Theorem}[section]
\newtheorem{lem}[thm]{Lemma}
\newtheorem{rem}[thm]{Remark}
\newtheorem{defn}[thm]{Definition}
\numberwithin{equation}{section}
\title[]{Symmetry of intrinsically singular solutions of double phase problems}
\author[S.\,Biagi]{Stefano Biagi}
\author[F.\,Esposito]{Francesco Esposito}
\author[E.\,Vecchi]{Eugenio Vecchi}
\address[S.\,Biagi]{Politecnico di Milano - Dipartimento di Matematica
\newline\indent
Via Bonardi 9, 20133 Milano, Italy}
\email{stefano.biagi@polimi.it}
\address[F.\,Esposito]{Dipartimento di Matematica e Informatica, Universit\`a della Calabria
\newline\indent
Ponte Pietro Bucci 31B, 87036 Arcavacata di Rende, Cosenza, Italy}
\email{esposito@mat.unical.it}
\address[E.\,Vecchi]{Dipartimento di Matematica, Universit\`a di Bologna
\newline\indent
Piazza di Porta San Donato 5, 40126 Bologna, Italy}
\email{eugenio.vecchi2@unibo.it}
\subjclass[2020]{35B06, 35J75, 35J62, 35B51}
\keywords{Double phase problems, singular solutions, moving plane method}
\thanks{The authors are members of INdAM-GNAMPA. 
F. Esposito is partially supported by PRIN project 2017JPCAPN (Italy): 
{\em Qualitative and quantitative aspects of nonlinear PDEs.}
}
\begin{document}
\begin{abstract}
We continue the study of positive singular solutions of PDEs arising from double phase functionals started in \cite{BEV}. In particular, we consider the 
case $p<q < 2$, and we relax the assumption on the capacity of the 
singular set using an intrinsic notion of capacity.

\end{abstract}

\maketitle

\medskip

\section{Introduction}\label{intro}
 Double phase functionals are integral functionals of the form 
 \begin{equation}\label{DoublePhase}
 u \mapsto \int_{\Omega}(|\nabla u|^p + a(x) |\nabla u|^q)\,\d x,
 \end{equation}
 where $\Omega \subset \mathbb{R}^N$, $1<p<q<N$ and $a(\cdot)\geq 0$. This class of functionals naturally appear in homogenization theory and in the study of strongly anisotropic materials (see, e.g., \cite{zhikov1}), and falls into the framework of the so called functionals with {\it non-standard growth} introduced by Marcellini 
 \cite{marcellini1,marcellini2}. The literature concerning functionals like \eqref{DoublePhase} is pretty vast and concerns as a main topic the regularity of minimizers, see e.g. \cite{BarColMin, ColMin1, ColMin2,ELM} and the references therein. 
 \medskip

In this paper we continue the study started in \cite{BEV} concerning qualitative properties of positive (and nontrivial) solutions $u\in C^{1}(\overline{\Omega}\setminus \Gamma)$ of a class of nonlinear PDEs which are closely related to the functional \eqref{DoublePhase}, namely
\begin{equation}\label{eq:Riey2}
  \left\{\begin{array}{rl}
  -\mathrm{div} \left(p|\nabla u|^{p-2}\nabla u 
  + q a(x)|\nabla u|^{q-2}\nabla u \right) = f(u) & \text{in $\Omega \setminus \Gamma$},\\
  u>0  & \text{in $\Omega \setminus \Gamma$},\\
  u = 0 & \textrm{on } \partial \Omega.
  \end{array}\right.
 \end{equation}
Here $\Omega \subset \R^N$ is a bounded and smooth domain, with $N \geq 2$ and $1< p < q < N$, while $\Gamma \subset \Omega$ is a closed set which is small in a proper (Sobolev)-capacitary sense. Under suitable assumptions on $a$ and $f$ (see below), and {\em only for $2\leq p<q<N$} we proved in \cite{BEV} that  {\em solutions} inherit symmetry from $\Omega$. Our aim is twofold:
\begin{itemize}
\item consider the case $1<p<q<2$;
\item relax the assumptions on $\Gamma$ using a more intrinsic notion of capacity.
\end{itemize}
We now clarify once for all what is the notion of solution to \eqref{eq:Riey2} we will work with.

\begin{defn} \label{def:solution} 
 We say that a function
 $u\in C^{1}({\overline\Omega}\setminus \Gamma)$ is a solution to 
 problem \eqref{eq:Riey2} if 
 it satisfies the following two properties:
 \begin{enumerate}
  \item $u > 0$ in $\Omega\setminus\Gamma$ and $u=0$ on $\partial \Omega$;
  \item for every $\varphi\in
	C^1_c(\Omega\setminus\Gamma)$ one has
	\begin{equation} \label{debil1}
	\int_\Omega \big(p|\nabla u|^{p-2}+qa(x)|\nabla u|^{q-2}\big)
	\langle \nabla u, \nabla
	\varphi\rangle\,\d x\, = \,\int_\Omega f(u)\varphi\,\d x,
	\end{equation}
 \end{enumerate}	
 \noindent where $\langle \cdot, \cdot \rangle$ denotes the standard scalar product in $\mathbb{R}^N$.
\end{defn}
 For results concerning the existence of solutions to \eqref{eq:Riey2} we refer  for example to \cite{CreGasWin,PapaWin}. We are finally ready to state our main result.
 \begin{thm}\label{thm:symmetry}
  Let $\tfrac{2N}{N+2}< p <q< 2$  and let $\Omega\subseteq\R^N$ be a convex open set, symmetric 
  wrt the  $x_1$-direction. Moreover, let $\Gamma\subseteq\Omega\cap\{x_1 = 0\}$
  be a closed set such that
  \begin{equation} \label{eq:assCappqzero}
   \mathrm{Cap}_{p,q}(\Gamma) = 0.
  \end{equation}
  Finally, we require $a$ and $f$ to satisfy the following assumptions:
  \begin{enumerate}
   \item $a\in L^\infty(\Omega)\cap C^1(\Omega)$ is \emph{non-negative and independent of $x_1$};
   
   \item $f:\R \rightarrow \R$ is a locally Lipschitz continuous function with $f(s)>0$ for $s>0$.
  \end{enumerate}  
  Then, any solution $u\in C^1(\overline{\Omega}\setminus\Gamma)$ 
  to \eqref{eq:Riey2}
  is symmetric wrt the hyperplane $\{x_1=0\}$ and increasing 
  in the $x_1$-direction in $\Omega \cap \{x_1<0\}$. 
\end{thm}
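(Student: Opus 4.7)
The plan is to run the moving plane method in the $x_1$-direction, adapted both to the singular case $1<p<q<2$ (where the standard weak comparison principle fails) and to the presence of the capacity-zero set $\Gamma\subset\{x_1=0\}$. Set $a_1=\inf_\Omega x_1<0$; for $\lambda\in(a_1,0]$ let $T_\lambda=\{x_1=\lambda\}$, $\Omega_\lambda=\Omega\cap\{x_1<\lambda\}$, let $R_\lambda$ be the reflection across $T_\lambda$, and set $u_\lambda(x)=u(R_\lambda(x))$. Since $a$ is independent of $x_1$ and $\Omega$ is $x_1$-symmetric, $u_\lambda$ solves the same equation where defined. For $\lambda\le 0$ the set $\Gamma$ does not meet $\Omega_\lambda$, whereas $u_\lambda$ is singular on the reflected set $R_\lambda(\Gamma)\subset\{x_1=2\lambda\}\cap\Omega_\lambda$. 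The aim is to show that
\[
\Lambda=\bigl\{\lambda\in(a_1,0]:\ u\le u_\mu \text{ in }\Omega_\mu\setminus R_\mu(\Gamma)\ \forall\mu\in(a_1,\lambda]\bigr\}
\]
contains $0$; the conclusion then follows by the symmetric argument from the opposite side combined with the strong comparison principle.

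The key estimate is obtained by plugging the test function $\varphi=(u-u_\lambda)^+\eta_\ep^2$ into the weak formulation of the equations for $u$ and $u_\lambda$ and subtracting, where $\eta_\ep\in C^\infty_c(\Omega\setminus R_\lambda(\Gamma))$ is an admissible cutoff satisfying $\eta_\ep\to 1$ and
\[
\int_\Omega\bigl(|\nabla\eta_\ep|^p+a(x)|\nabla\eta_\ep|^q\bigr)\,\d x\longrightarrow 0\qquad(\ep\to 0).
\]
Such cutoffs are provided precisely by the assumption $\mathrm{Cap}_{p,q}(\Gamma)=0$, using that this intrinsic capacity is invariant under the reflection $R_\lambda$. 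Exploiting the weighted monotonicity of the $s$-Laplacian in the singular regime $1<s<2$,
\[
\langle|\xi|^{s-2}\xi-|\eta|^{s-2}\eta,\xi-\eta\rangle\ge c_s\,\frac{|\xi-\eta|^2}{(|\xi|+|\eta|)^{2-s}},
\]
for $s=p$ and $s=q$, estimating the terms containing $\nabla\eta_\ep$ via H\"older's inequality and the uniform $C^1$-bounds on $u,u_\lambda$ away from the singular sets, and letting $\ep\to 0$, one derives a weighted inequality of the form
\[
\int_{\Omega_\lambda}\frac{|\nabla (u-u_\lambda)^+|^2}{(|\nabla u|+|\nabla u_\lambda|)^{2-p}}\,\d x\le C\int_{\Omega_\lambda}\bigl((u-u_\lambda)^+\bigr)^2\,\d x.
\]

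Coupling this with a weighted Poincar\'e-type inequality on narrow strips --- whose use rests on the condition $p>\tfrac{2N}{N+2}$ (equivalently $p^*\ge 2$) --- the two-step moving plane argument proceeds as follows: (i) for $\lambda$ close to $a_1$ the domain $\Omega_\lambda$ is narrow, so the Poincar\'e constant is small enough to force $(u-u_\lambda)^+\equiv 0$; (ii) defining $\lambda_*=\sup\Lambda$ and arguing by contradiction, the continuity of $u_\lambda$ in $\lambda$ combined with a strong comparison principle in the singular regime (\`a la Damascelli--Sciunzi, applied locally in $\Omega_{\lambda_*}\setminus R_{\lambda_*}(\Gamma)$) allows one to push $\lambda_*$ strictly past, showing $\lambda_*=0$. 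A mirror argument yields $u=u_0$, proving symmetry, and monotonicity in $\Omega\cap\{x_1<0\}$ then follows from the strong comparison principle applied to $u-u_\lambda$ for $\lambda\in(a_1,0)$.

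The main obstacle is the simultaneous handling of the two distinct singular operators via the intrinsic capacity. Specifically, the cutoff $\eta_\ep$ must make \emph{both} error integrals, involving $|\nabla\eta_\ep|^p$ and $a(x)|\nabla\eta_\ep|^q$, vanish in the limit; it is precisely to accommodate these two scales that the definition $\mathrm{Cap}_{p,q}$ is needed, in place of the standard Sobolev $p$-capacity used in \cite{BEV}. An equally delicate point is closing the method at $\lambda_*=0$, where $R_{\lambda_*}(\Gamma)=\Gamma$ touches the boundary $T_0$ of $\Omega_0$: one must perform the passage to the limit while controlling the capacity-vanishing error terms simultaneously in the $p$- and $q$-energies, which is possible only thanks to \eqref{eq:assCappqzero}.
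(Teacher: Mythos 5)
Your outline reproduces the paper's overall strategy (moving planes started near $\mathbf{a}$, cut-offs built from $\mathrm{Cap}_{p,q}(\Gamma)=0$ and its reflection invariance, a weighted energy estimate, smallness of $|\Omega_{\lambda}\setminus\mathcal{K}|$ to close the argument), but the two steps you treat as routine are exactly the ones that fail without an additional idea in the range $1<p<q<2$. First, in deriving the weighted inequality you dismiss the error terms containing $\nabla\eta_\ep$ ``via H\"older's inequality and the uniform $C^1$-bounds on $u,u_\lambda$ away from the singular sets''. But $\nabla\eta_\ep$ is supported in a shrinking neighborhood of $R_\lambda(\Gamma)$, which is precisely where $u_\lambda$ is singular: no $C^1$ bound on $u_\lambda$ is available there, and there is no a priori information that $\nabla u_\lambda\in L^{p}$ near $R_\lambda(\Gamma)$. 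By the inequality $\big||\eta|^{s-2}\eta-|\eta'|^{s-2}\eta'\big|\le C|\eta-\eta'|^{s-1}$ (valid for $s<2$), the error term is of the form $\int w_\lambda^+\,|\nabla w_\lambda^+|^{p-1}|\nabla\eta_\ep|$, and a single H\"older application leaves the factor $\big(\int|\nabla w_\lambda^+|^{p}\big)^{(p-1)/p}$, which may be infinite (or, with the cutoff inserted, is not bounded uniformly in $\ep$). The paper's Lemma \ref{leaiuto} overcomes this by splitting $\Omega_\lambda$ into $\{|\nabla u_\lambda|<2|\nabla u|\}$, where $|\nabla w_\lambda|\lesssim|\nabla u|$ is bounded and H\"older plus the capacity decay suffices, and $\{|\nabla u_\lambda|\ge 2|\nabla u|\}$, where $|\nabla w_\lambda|\simeq|\nabla u_\lambda|$ and a weighted Young inequality lets the term be absorbed into the left-hand side before sending $\ep\to0$ by Fatou. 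Some such device is indispensable, and your sketch does not supply it; this is the main technical point of the $p<q<2$ case.

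Second, the ``weighted Poincar\'e-type inequality on narrow strips'' is not available off the shelf here: the classical weighted Sobolev/Poincar\'e inequality of Trudinger (as used by Damascelli--Sciunzi) requires summability of the inverse weight $(|\nabla u|+|\nabla u_\lambda|)^{2-p}$, which is unknown near $R_\lambda(\Gamma)$ for an intrinsically singular solution --- this is precisely the obstruction discussed in the introduction. What replaces it is the chain: H\"older with exponents built on $p^{*}$ (this is where $p>\tfrac{2N}{N+2}$ enters), the ordinary Sobolev inequality, and the estimate $\int_{\Omega_\lambda}|\nabla w_\lambda^+|^{p}\,\d x\le \mathbf{c}\,\big(\int_{\Omega_\lambda}(|\nabla u|+|\nabla u_\lambda|)^{p-2}|\nabla w_\lambda^+|^{2}\,\d x\big)^{p/2}$ of Lemma \ref{lem:SobolevSplitting}, whose proof again rests on the same gradient splitting together with the bound of Lemma \ref{leaiuto}; this link is the nontrivial part and must be proved, not assumed. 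Finally, your passage beyond $\lambda_{*}$ skips the degenerate set $\mathcal{Z}_{\lambda_*}=\{\nabla u=\nabla u_{\lambda_*}=0\}$: the strong comparison principle only applies in connected components of its complement, so the alternative $u\equiv u_{\lambda_*}$ in some component must be excluded (Lemma \ref{conncomp}), and one needs $|\mathcal{Z}_{\lambda_*}|=0$ plus a Hopf-type lemma to choose a small neighborhood $\mathcal{H}_{\lambda_*}$ of the critical set and the compact $\mathcal{K}$ on which $u<u_{\lambda_*+\tau}$; these ingredients are missing from your outline.
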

\begin{rem} \label{rem:casogenerale}
We explicitly stress that, even if Theorem \ref{thm:symmetry} is stated
 under the assumption $p<q<2$, by combining the techniques exploited
 in this paper with the approach carried out in \cite{BEV} one can easily
 see that the same result holds when
 $$\frac{2N}{N+2}<p < 2 \leq q.$$
 We refer to Remark \ref{rem:casogeneraleDett} for some further detail.
\end{rem}
The result is of classical flavor and resembles the seminal papers \cite{GNN} and \cite{BN} where the authors proved symmetry of solutions of semilinear elliptic equations by means of the moving plane method introduced by Alexandrov \cite{A} and Serrin \cite{serrin}.
The power of this very elegant method is witnessed by the huge existing literature: 
see \cite{Dan, Troy} for the case of cooperative elliptic systems, 
see, e.g., \cite{Da2, DP, DS1, FMRS, FMS2}
for the case of quasilinear equations and \cite{Dan2} for the case
of non-smooth domains.
\medskip

The present situation is slightly different in we are considering singular solutions. In order to face the natural technical difficulties which arise in this case, we exploit a rather recent modification of 
the moving plane method introduced by Sciunzi in \cite{Dino} in the case of singular solutions. This technique has already proved to be flexible enough to  be adapted in the case of unbounded sets \cite{EFS}, $p$-Laplacian operator \cite{MPS}, cooperative elliptic systems \cite{BVV,esposito}, fractional Laplacian \cite{MPS}, mixed local--nonlocal elliptic operators \cite{BDVV}, cooperative quasilinear systems 
\cite{BEMV}, double phase problems with $p\geq 2$ \cite{BEV}.
Concerning qualitative properties
for \emph{singular solutions} of certain PDEs, it is worth highlighting that
there are several works
dealing with \emph{point-type singularities}, see, e.g., \cite{BVV0, CLN2, Ter}. 
As a matter of fact, since our assumption 
\eqref{eq:assCappqzero} is clearly satisfied by
this kind of singularities, the above
Theorem \ref{thm:symmetry} can be seen as a generalization of \cite{CLN2,Ter} 
to the double phase setting.
\medskip

Let us now spend a few comments on Theorem \ref{thm:symmetry}. As already mentioned, we use a variant of the moving plane method which needs
few classical tools, like 
 a weak/strong com\-pa\-ri\-son principle and Hopf-type Lemma; 
 owing to \cite{riey} and \cite{BEV}, these tools are all at our disposal.
 In order to better explain the delicate technical difference with respect to the case considered in \cite{BEV}, let us recall that already in the pure $p$-Laplacian case (i.e., $a(x)\equiv 0$) when $1<p<2$, the operator may present a pretty singular behavior near the set $\Gamma$ and hence the inverse of the weight $\rho:=|\nabla u|^{p-2}$ may not have the right summability properties which allow to use a weighted Sobolev inequality of Trudinger \cite{Tru}. 
 
 In \cite{EMS}, the authors avoided this problem (for the pure $p$-Laplacian case)
 by performing an 
accurate analysis of the behavior of the gradient of the solution near the set $\Gamma$ based on previous results contained in \cite{PQS}. This is currently out of reach for us. Nevertheless, in the recent \cite{BEMV} a similar problem appeared in the case of quasilinear systems with a gradient term and we have been able to circumvent it by a nice use of the classical H\"{o}lder inequality. This trick can be adapted to the present setting as well. We believe that the simplicity of the argument is kind of surprising, but we have to pay the price of a not optimal lower bound for $p$ (i.e. $p>\tfrac{2N}{N+2}$), which we cannot push down to $1$. 
\vspace*{0.05cm}

A second novelty with respect to \cite{BEV} concerns the use of an intrinsic capacity (denoted by $\mathrm{Cap}_{p,q}(\cdot)$) recently used in \cite{DFM}. First of all, this provides an immediate generalization of our previous result because
\begin{equation*}
\mathrm{Cap}_q (E)=0 \Rightarrow \mathrm{Cap}_{p,q}(E)=0.
\end{equation*}
Moreover, we want to stress that this choice has also a technical impact in the proof of the crucial Lemma \ref{leaiuto}. Incidentally, we also point out that the aforementioned Lemma reads exactly as its counterpart \cite[Lemma 2.4]{BEV}, but the proof has to be 
performed once again because we are now considering the case $p<q<2$.
\medskip
 
 The plan of the paper is the following:

\begin{itemize}
	\item[-] In Section \ref{sec.preliminaries} we fix the notations used 
	throughout the paper. Moreover, we recall the notion of $(p,q)$-capacity and some related theorems. Finally, we prove the key Lemma \ref{leaiuto}.
	
	\item[-] In Section \ref{sec.mainresult1} we prove Theorem \ref{thm:symmetry}.

\end{itemize}

\section{Notations and auxiliary results} \label{sec.preliminaries}
 The aim of this section is twofold: on the one hand, we fix once and for all
 the relevant notation used throughout the paper; on the other hand,
 we present some auxiliary results which shall be key ingredients
 for the proof of Theorem \ref{thm:symmetry}.
 
 \subsection{A review of capacities.} 
 Let $1\leq r \leq N$ be fixed, and let $K\subseteq\R^N$ be a \emph{compact set}.
 We remind that the classical Sobolev $r$-capacity of $K$ is defined as
 \begin{equation*}
 \mathrm{Cap}_r(K):=
  \inf \bigg\{\int_{\mathbb{R}^N}|\nabla \varphi|^r\,\d x\,:\,
  \text{$\varphi \in C^{\infty}_c(\R^N)$ and $\varphi\geq 1$ on $K$}\bigg\}.
 \end{equation*}
 Moreover, if $D\subseteq\R^N$ is any bounded set containing $K$,
 it is possible to define the $r$-capacity of \emph{the condenser
 $(K,D)$} in the following way
 \begin{equation}\label{eq:q-capacityrel}
 \mathrm{Cap}^D_r(K) :=
  \inf \bigg\{\int_{\mathbb{R}^N}|\nabla \varphi|^r\,\d x\,:\,
  \text{$\varphi \in C^{\infty}_c(D)$ and $\varphi\geq 1$ on $K$}\bigg\}.
 \end{equation}
 More recently there has been a certain interest in the study of capacities connected with Orlicz-Sobolev spaces, see e.g., \cite{BHH}. In particular, when dealing with the double-phase fun\-ctio\-nals, one is naturally led to consider a so-called $(p,q)$-capacity, see \cite{DFM}: 
 if {$K\subset \Omega$} is a compact set, we define the \emph{$(p,q)$-capacity of $K$} as
 \begin{equation*}
 \mathrm{Cap}_{p,q}(K):=
  \inf \bigg\{\int_{\R^n}(|\nabla \varphi|^p + a(x)|\nabla \varphi|^q)\,\d x\,:\,
  \text{$\varphi \in C^{\infty}_c(\Omega)$ and $\varphi\geq 1$ on $K$}\bigg\}.
 \end{equation*} 

 Similarly to \eqref{eq:q-capacityrel}, it is then possible to define the $(p,q)$-capacity of the 
 condenser $(K,D)$ (where $D\subseteq\Omega$ is a bounded open set containing $K$);
 with this definition at hand, we say that $K$ has \emph{vanishing $(p,q)$-capacity},
 and we write $\mathrm{Cap}_{p,q}(K) = 0$, if
 $$\mathrm{Cap}^D_r(K) = 0\qquad\text{for every open set $D\supseteq K$}.$$
 \begin{rem} \label{rem:capacityprop}
  We list, for a future reference, a couple of remarks
  highlighting the relation between the $(p,q)$-capacity
  and the classical Sobolev capacities $\mathrm{Cap}_p(\cdot),\,\mathrm{Cap}_q(\cdot)$.
  In what follows, we tacitly understand that $K\subseteq\Omega$ is a fixed
  compact set.
  \begin{enumerate}
   \item If $K$ has \emph{vanishing $q$-capacity}, then $\mathrm{Cap}_{p,q}(K) = 0$.
   To prove this fact it suffices to observe that,
   if $D\subseteq\Omega$ is a bounded open set containing $K$ and if
   $\varphi\in C_c^\infty(D)$ sa\-ti\-sfi\-es $\varphi\geq 1$ on $K$, 
   by the boundedness of $a(\cdot)$
   we have
   \begin{align*}
    &\int_{\mathbb{R}^N}(|\nabla \varphi|^p + a(x)|\nabla \varphi|^q)\,\d x \\
    & \qquad \leq
    |\Omega|^{1-\frac{p}{q}}\bigg(\int_{\R^n}|\nabla\varphi|^q\,\d x\bigg)^{\frac{p}{q}}
    +
    \|a\|_{L^\infty(\Omega)}\int_{\R^N}|\nabla\varphi|^q\,\d x,
   \end{align*}
   where $|\cdot|$ denotes the classical Lebesgue measure in $\R^N$.
   \vspace*{0.05cm}
   
   \item If $K$ has vanishing $(p,q)$-capacity, then $\mathrm{Cap}_p(K) = 0$.
   To prove this fact it suffices to observe that, if $D\subseteq\Omega$ is a bounded 
   open set containing $K$ and if
   $\varphi\in C_c^\infty(D)$ sa\-ti\-sfi\-es $\varphi\geq 1$ on $K$, 
   since $a(\cdot)\geq 0$ in $\Omega$ we have
   $$\int_{\mathbb{R}^N}(|\nabla \varphi|^p + a(x)|\nabla \varphi|^q)\,\d x\geq 
   \int_{\R^N}|\nabla \varphi|^p\,\d x.$$
   In particular, if $\mathrm{Cap}_{p,q}(K) = 0$ we deduce that $|K| = 0$.
  \end{enumerate}
 \end{rem}
 \subsection{Notations for the moving plane method.}
 Let $\Gamma\subseteq\Omega\subseteq\R^N$ be as in the statement
 of Theorem \ref{thm:symmetry}, and let
 $u\in C^1(\overline{\Omega}\setminus\Gamma)$ be a solution
 of \eqref{eq:Riey2}. For any fixed $\lambda\in\R$, we in\-di\-ca\-te by $R_\lambda$
 the reflection trough the hy\-per\-pla\-ne $\Pi_\lambda := \{x_1 = \lambda\}$,
 that is,
 \begin{equation*}
  R_\lambda(x) = x_\lambda := (2\lambda-x_1,x_2,\ldots,x_N)
  \qquad (\text{for all $x\in\R^N$});
 \end{equation*}
 accordingly, we define the function
 \begin{equation} \label{eq.defulambda}
  u_\lambda(x) := u(x_\lambda), \qquad\text{for all
  $x\in R_\lambda\big(\overline{\Omega}\setminus\Gamma\big)$}.
 \end{equation}
 We point out that, since $u$ solves \eqref{eq:Riey2}
 and $a$ is \emph{independent of $x_1$}, one has
 \begin{enumerate}
  \item  
 $u_\lambda\in C^1(R_\lambda(\overline{\Omega}\setminus\Gamma))$;
  \item $u_\lambda > 0$ in $R_\lambda(\Omega\setminus\Gamma)$ and 
  $u_\lambda \equiv 0$ on $R_\lambda(\partial\Omega\setminus\Gamma)$;
  \item for every test function $\varphi\in C^1_c(R_\lambda(\Omega\setminus\Gamma))$
  one has
  \begin{equation} \label{eq.PDEulambda}
  \int_{R_\lambda(\Omega)}\big( p|\nabla u_\lambda|^{p-2}+qa(x)|\nabla u_\lambda|^{q-2}\big)
  \langle \nabla u_\lambda,\nabla \varphi\rangle\,\d x\,=
  \,\int_{R_\lambda(\Omega)}f(u_\lambda)\varphi\,\d x.
  \end{equation}
 \end{enumerate}
 To proceed further, we let
 \begin{equation*}
  \mathbf{a} = \mathbf{a}_\Omega := \inf_{x\in\Omega}x_1
 \end{equation*}
 and we observe that, since $\Omega$ is (bounded and) symmetric with respect to
 the $x_1$-direction,
 we certainly have $-\infty < \mathbf{a} < 0$.
 Hence, for every $\lambda\in(\mathbf{a},0)$ we can set
 \begin{equation*}
 \Omega_\lambda := \{x\in\Omega:\,x_1<\lambda\}.
 \end{equation*}
 Notice that the convexity of
 $\Omega$ in the $x_1$-direction ensures that
 \begin{equation} \label{eq.inclusionOmegalambda}
  \Omega_\lambda\subseteq R_\lambda(\Omega)\cap \Omega.
 \end{equation}
 Finally, for every $\lambda\in(\mathbf{a},0)$ we define the function
 \begin{equation} \label{eq:defwlambda}
 w_\lambda(x) := (u-u_\lambda)(x), \qquad \text{for
 $x\in (\overline{\Omega}\setminus\Gamma)\cap R_\lambda(\overline{\Omega}\setminus\Gamma)$}.
 \end{equation}
 On account of \eqref{eq.inclusionOmegalambda}, 
 $w_\lambda$ is surely well-posed on
 $\overline{\Omega}_\lambda\setminus R_\lambda(\Gamma)$.
 
 \subsection{Auxiliary results.}
 From now on, we assume that all the hypotheses of Theorem \ref{thm:symmetry}
 are satisfied. Moreover, we tacitly inherit all the notation
 introduced so far.
 \medskip
 
 To begin with, we remind some
 identities between vectors in $\R^N$ which
 are very useful in dealing
 with quasilinear operators:
 \emph{for every $s\in (1,2)$ 
 there exist constants $C_1,C_2 > 0$, only depending on $s$, such that,
 for every $\eta,\eta'\in\R^N$, one has}
 \begin{equation}\label{eq:inequalities}
  \begin{split}
 & \langle |\eta|^{s-2}\eta-|\eta'|^{s-2}\eta', \eta- \eta'\rangle \geq C_1
(|\eta|+|\eta'|)^{s-2}|\eta-\eta'|^2, \\[0,15cm]
& \big| |\eta|^{s-2}\eta-|\eta'|^{s-2}\eta '| \leq C_2|\eta-\eta'|^{s-1}.
\end{split}
\end{equation}
We refer, e.g., to \cite{Da2} for a proof of 
\eqref{eq:inequalities}. \medskip

 Next, we need to define
 an \emph{ad-hoc} family of Sobolev functions in $\Omega$ allowing
 us to `cut off' of the singular set $\Gamma$. To this end, let
 $\varepsilon >0$ be small enough such that
 $$\mathcal{B}^{\lambda}_{\varepsilon}
 := \big\{x\in\R^N:\,\mathrm{dist}(x, R_{\lambda}(\Gamma)) < \varepsilon\big\}
 \subseteq\Omega.$$ 
 As for the classical capacity, being $R_\lambda$ an affine map, we see that
 $$\Cp_{p,q}\big(R_\lambda(\Gamma)\big) = 0.$$
 then, by definition, there exists $\varphi_{\varepsilon}\in 
 C_c^\infty(\mathcal{B}^{\lambda}_{\varepsilon})$
 such that
 \begin{equation} \label{eq.choicephieps}
  \text{$\varphi_\varepsilon\geq 1$ on $R_\lambda(\Gamma)$}\qquad
   \text{and}\qquad
   \int_{\mathcal{B}^{\lambda}_{\varepsilon}} 
   (|\nabla
  \varphi_{\varepsilon}|^p + a(x)|\nabla
  \varphi_{\varepsilon}|^q)\,\d x < \varepsilon.
 \end{equation}
 We then consider the Lipschitz functions
 \begin{itemize}
  \item $T(s) := \max\{0;\min\{s;1\}\}$ (for $s\in\R$), \vspace*{0.03cm}
  \item $g(t) := \max\{0;-2s+1\}$ (for $t\geq 0$)
 \end{itemize}
 and we define, for $x\in\R^N$,
 \begin{equation*}
  \psi_{\varepsilon}(x):= g\big(T(\varphi_{\varepsilon}(x))\big).
 \end{equation*}
 In view of \eqref{eq.choicephieps}, and taking into account
 the very definitions of $T$ and $g$, it is not difficult to recognize that
 $\psi_\varepsilon$ satisfy the following properties:
 \begin{enumerate}
  \item $\psi_\varepsilon \equiv 1$ on $\R^N\setminus \mathcal{B}^\lambda_\varepsilon$
  and $\psi_\varepsilon\equiv 0$ on some neighborhood of $R_\lambda(\Gamma)$, say
  $\mathcal{V}^\lambda_\ep\subseteq \mathcal{B}^{\lambda}_{\varepsilon}$; 
  \vspace*{0.03cm}
  
  \item $0\leq \psi_\varepsilon\leq 1$ on $\R^N$;
  \vspace*{0.03cm}
  
  \item $\psi_\varepsilon$ is Lipschitz-continuous in $\R^N$, so that
  $\psi_\varepsilon\in W^{1,\infty}(\R^N)$;
  \vspace*{0.03cm}
  
  \item there exists a constant $C > 0$, independent of $\varepsilon$, such that
  \begin{equation*}
   \int_{\R^N}(|\nabla
  \psi_{\varepsilon}|^p + a(x)|\nabla
  \psi_{\varepsilon}|^q)\,\d x \leq C \varepsilon.
  \end{equation*}
 \end{enumerate}
 \medskip
 
 With the family $\{\psi_\varepsilon\}_\varepsilon$ at hand, we can prove the following
 key lemma.
 \begin{lem}\label{leaiuto}
 Let $1<p<q<2$.
   For any fixed $\lambda\in(\mathbf{a},0)$ we have
	\begin{equation*}
	\int_{\Omega_\lambda} \big(p(|\nabla u| + |\nabla u_\lambda|)^{p-2}+
	qa(x)(|\nabla u| + |\nabla u_\lambda|)^{q-2}\big)\cdot|\nabla
	w_\lambda^+|^2\,\d x\leq
	\mathbf{c}_0,
	\end{equation*}
	where $\mathbf{c}_0 > 0$ is a constant only depending on $p,q,\lambda$ and
	$\|u\|_{L^\infty(\Omega_\lambda)}$.
 \end{lem}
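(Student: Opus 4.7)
The plan is to test the weak formulations \eqref{debil1} for $u$ and \eqref{eq.PDEulambda} for $u_\lambda$ against $\varphi = w_\lambda^+\psi_\varepsilon^2$ (extended by zero outside $\Omega_\lambda$, with $w_\lambda^+$ regularized by mollification so that $\varphi$ fits the admissibility class of Definition \ref{def:solution}). This test function is indeed admissible: $\overline{\Omega}_\lambda$ is disjoint from $\Gamma\subseteq\{x_1=0\}$ since $\lambda<0$, while $\psi_\varepsilon$ vanishes on a neighborhood $\mathcal V^\lambda_\varepsilon$ of $R_\lambda(\Gamma)$, where $u_\lambda$ may be singular. Subtracting the two identities and writing $\nabla\varphi=\psi_\varepsilon^2\nabla w_\lambda^++2\psi_\varepsilon w_\lambda^+\nabla\psi_\varepsilon$ produces a decomposition $I_1+I_2=\mathrm{RHS}$.

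The first inequality in \eqref{eq:inequalities} (applied with $s=p$ and $s=q$) bounds $I_1$ from below by a positive multiple of
\[
\int_{\Omega_\lambda}\psi_\varepsilon^{2}\bigl[p(|\nabla u|+|\nabla u_\lambda|)^{p-2}+qa(x)(|\nabla u|+|\nabla u_\lambda|)^{q-2}\bigr]|\nabla w_\lambda^+|^2\,\d x,
\]
which is the integral appearing in the lemma (cut off by $\psi_\varepsilon^2$). Since $\overline{\Omega}_\lambda\Subset\overline{\Omega}\setminus\Gamma$, we have $M:=\|u\|_{L^\infty(\Omega_\lambda)}<\infty$, and on $\{w_\lambda^+>0\}$ both $u$ and $u_\lambda$ lie in $[0,M]$; the local Lipschitz continuity of $f$ then yields $|\mathrm{RHS}|\leq LM^2|\Omega_\lambda|$ with $L$ depending only on $M$ and $f$.

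The crux is a uniform bound on $|I_2|$ as $\varepsilon\to 0^+$. Invoking the second inequality of \eqref{eq:inequalities} and $|\nabla w_\lambda|\leq|\nabla u|+|\nabla u_\lambda|$ we obtain
\[
|I_2|\leq CM\int_{\Omega_\lambda}\psi_\varepsilon\bigl[(|\nabla u|+|\nabla u_\lambda|)^{p-1}+a(x)(|\nabla u|+|\nabla u_\lambda|)^{q-1}\bigr]|\nabla\psi_\varepsilon|\,\d x.
\]
Apply H\"older's inequality with exponents $\frac{p}{p-1},p$ to the first term and, splitting $a=a^{1/q'}\cdot a^{1/q}$, the weighted version with exponents $\frac{q}{q-1},q$ to the second. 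This factors out $\|\nabla\psi_\varepsilon\|_{L^p}$ and $\|a^{1/q}\nabla\psi_\varepsilon\|_{L^q}$, both vanishing as $\varepsilon\to 0$ by \eqref{eq.choicephieps}. The remaining factors are $\bigl(\int(|\nabla u|+|\nabla u_\lambda|)^p\bigr)^{(p-1)/p}$ and its $a$-weighted $q$-analogue; the $|\nabla u|$ contribution is trivial on $\overline{\Omega}_\lambda$, while for $|\nabla u_\lambda|$ the change of variables $y=R_\lambda(x)$ reduces matters to integrating $|\nabla u|^p+a|\nabla u|^q$ over $R_\lambda(\Omega_\lambda)\subseteq\Omega$, a region that meets $\Gamma$. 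This global integrability is precisely the removable-singularity content of $\mathrm{Cap}_{p,q}(\Gamma)=0$: it is obtained by testing \eqref{debil1} against $u(1-\chi_\varepsilon)^{\max(p,q)}$ with $\chi_\varepsilon$ a $(p,q)$-capacity cutoff of $\Gamma$ and absorbing via Young's inequality. Combining the three estimates yields $I_1\leq \mathbf{c}_0$ uniformly in $\varepsilon$, and Fatou's lemma with $\psi_\varepsilon\to 1$ a.e.\ on $\Omega_\lambda$ concludes the proof.

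The main obstacle is precisely the uniform control of $I_2$. For $p<2$ the weight $(|\nabla u|+|\nabla u_\lambda|)^{p-2}$ is singular at critical points, which prevents one from following \cite{BEV} (where Trudinger's weighted Sobolev inequality was used under $p\geq 2$). The restriction $p>\tfrac{2N}{N+2}$ is expected to enter at this step, supplying (through the Sobolev embedding $W^{1,p}_0\hookrightarrow L^2$) the integrability gain that makes the classical H\"older bound close---at the cost of this non-optimal lower bound on $p$, as anticipated in the introduction.
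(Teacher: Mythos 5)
Your overall skeleton (testing \eqref{debil1} and \eqref{eq.PDEulambda} with $w_\lambda^+$ times a power of $\psi_\varepsilon$, using \eqref{eq:inequalities}, the local Lipschitz bound for $f$, and Fatou at the end) is the same as the paper's, and the choice of exponent ($\psi_\varepsilon^2$ instead of $\psi_\varepsilon^{p+q}$) is immaterial. The genuine gap is in your treatment of the cross term $I_2$. After H\"older you need the finiteness of $\big(\int_{\Omega_\lambda}(|\nabla u|+|\nabla u_\lambda|)^p\,\d x\big)^{(p-1)/p}$ and of its $a$-weighted $q$-analogue, and for the $|\nabla u_\lambda|$ contribution this amounts, after reflection, to the global energy bound $\int_{\Omega}(|\nabla u|^p+a(x)|\nabla u|^q)\,\d x<\infty$ across $\Gamma$. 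This is neither assumed nor proved in the paper, and your proposed derivation (testing \eqref{debil1} with $u(1-\chi_\varepsilon)^{\max(p,q)}$ and absorbing by Young) does not close: solutions in the sense of Definition \ref{def:solution} are only $C^1(\overline{\Omega}\setminus\Gamma)$ and may blow up at $\Gamma$, so the right-hand side $\int f(u)\,u\,(1-\chi_\varepsilon)^{\max(p,q)}\,\d x$ is not under control (no growth condition on $f$, $u$ possibly unbounded), and the cutoff contribution left after Young is of the form $\int u^p|\nabla\chi_\varepsilon|^p\,\d x+\int a\,u^q|\nabla\chi_\varepsilon|^q\,\d x$, which the capacity bound \eqref{eq.choicephieps} controls only if $u$ stays bounded near $\Gamma$ --- exactly what fails for intrinsically singular solutions. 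So, as written, the uniform bound on $I_2$ is not established.

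The paper avoids any global energy information on $u_\lambda$ near $R_\lambda(\Gamma)$ by splitting $\Omega_\lambda=\Omega^{(1)}_\lambda\cup\Omega^{(2)}_\lambda$, where $\Omega^{(1)}_\lambda=\{|\nabla u_\lambda|<2|\nabla u|\}$ and $\Omega^{(2)}_\lambda=\{|\nabla u_\lambda|\geq 2|\nabla u|\}$. On $\Omega^{(1)}_\lambda$ the term is dominated by $|\nabla u|$, and $\int_{\Omega_\lambda}|\nabla u|^p\,\d x$ is finite simply because $\overline{\Omega}_\lambda$ stays away from $\Gamma\subseteq\{x_1=0\}$ when $\lambda<0$; there your H\"older estimate with exponents $(\tfrac{p}{p-1},p)$ closes. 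On $\Omega^{(2)}_\lambda$ one has $\tfrac12|\nabla u_\lambda|\leq|\nabla w_\lambda|$ and $|\nabla u|+|\nabla u_\lambda|\leq 3|\nabla w_\lambda|$, so a weighted Young inequality turns the dangerous factor into $\rho\int(|\nabla u|+|\nabla u_\lambda|)^{p-2}|\nabla w_\lambda^+|^2\psi_\varepsilon^{p+q}\,\d x$ (plus a capacity term), which is \emph{absorbed into the left-hand side} for $\rho$ small; the same scheme handles $I_q$. This absorption is what replaces the missing removable-singularity step, and it is the point of the ``H\"older trick'' mentioned in the introduction. Finally, your closing remark misattributes the role of $p>\tfrac{2N}{N+2}$: Lemma \ref{leaiuto} holds for all $1<p<q<2$ and its proof uses no Sobolev embedding; the restriction $p^*\geq 2$ enters only later, in the proof of Theorem \ref{thm:symmetry} (estimate \eqref{eq:RHS}, together with Lemma \ref{lem:SobolevSplitting}), where $\int(w^+_{\lambda_0+\tau})^2\,\d x$ is reabsorbed via the Sobolev inequality.
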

 \begin{proof}
  For every fixed $\varepsilon > 0$, we consider the function
  $$\varphi_\varepsilon(x):= 
  \begin{cases}
   w_\lambda^+(x)\,\psi_\varepsilon^{p+q}(x) = (u-u_\lambda)^+(x)\,\psi_\varepsilon^{p+q}(x),
   & \text{if $x\in\Omega_\lambda$}, \\
   0, & \text{otherwise}.
   \end{cases}$$
  We claim that the following assertions hold:
  \begin{itemize}
   \item[(i)] $\varphi_\ep\in \mathrm{Lip}(\R^N)$;
   \item[(ii)]$\mathrm{supp}(\varphi_\ep)\subseteq\Omega_\lambda$ and
  $\varphi_\ep\equiv 0$ near $R_\lambda(\Gamma)$.
  \end{itemize}
  In fact, since $u\in C^1(\overline{\Omega}_\lambda)$ and $u_\lambda\in C^1(
  \overline{\Omega}_\lambda\setminus R_\lambda(\Omega))$,
  we have 
  $w_\lambda^+\in \mathrm{Lip}(\overline{\Omega}_\lambda\setminus V)$
  \emph{for every o\-pen set $V\supseteq R_\lambda(\Gamma)$};
  as a consequence,
  reminding that $\psi_\varepsilon\in\mathrm{Lip}(\R^N)$
  and $\psi_\varepsilon\equiv 0$ on a ne\-igh\-bor\-hood of $R_\lambda(\Gamma)$,
  we get
  $\varphi_\varepsilon\in \mathrm{Lip}(\overline{\Omega}_\lambda)$.
  On the other hand, since 
  $\varphi_\varepsilon\equiv 0$ on $\partial\Omega_\lambda$, we easily conclude that
  $\varphi_\ep\in \mathrm{Lip}(\R^N)$,
  as claimed. As for assertion (ii), it is a direct consequence
  of the very definition of $\varphi_\ep$ and of the fact that
  $$\text{$\psi_\ep\equiv 0$ on $\mathcal{V}^\lambda_\ep\supseteq R_\lambda(\Gamma)$}.$$
  On account of properties (i)-(ii) of $\varphi_\ep$,
   a standard density argument allows us to use
  $\varphi_\ep$ as a test function \emph{both in \eqref{debil1} and
  \eqref{eq.PDEulambda}}; reminding that $a$ is independent of $x_1$, this gives
	\begin{equation*}
	\begin{split} 
     &
     p\int_{\Omega_\lambda}
     \langle |\nabla u|^{p-2} \nabla u - |\nabla u_\lambda|^{p-2}\nabla u_\lambda,\nabla
     \varphi_\ep\rangle\,\d x
     \\
     & \qquad\qquad+ q
     \int_{\Omega_\lambda}a(x)\,\langle
     |\nabla u|^{q-2} \nabla u - |\nabla u_\lambda|^{q-2}\nabla u_\lambda,
		\nabla \varphi_\ep\rangle\,\d x \\
		&\quad = \int_{\Omega_\lambda} (f(u)-f(u_\lambda))\varphi_\ep\,\d x.
	\end{split}
  \end{equation*}
  By unraveling the very definition of $\varphi_\ep$, we then obtain
  \begin{equation} \label{eq.identityIntermediate}
	\begin{split} 
     &
     p\int_{\Omega_\lambda}
     \psi_\ep^{p+q}\cdot
     \langle |\nabla u|^{p-2} \nabla u - |\nabla u_\lambda|^{p-2}\nabla u_\lambda,\nabla
     w_\lambda^+\rangle\,\d x
     \\
     & \qquad\quad+ q
     \int_{\Omega_\lambda}\psi_\ep^{p+q}\cdot a(x)\,\langle
     |\nabla u|^{q-2} \nabla u - |\nabla u_\lambda|^{q-2}\nabla u_\lambda,
		\nabla w_\lambda^+\rangle\,\d x \\
    & \qquad\quad
    + p(p+q)
    \int_{\Omega_\lambda}
     w_\lambda^+\cdot\psi_\ep^{p+q-1}\cdot
     \langle |\nabla u|^{p-2} \nabla u - |\nabla u_\lambda|^{p-2}\nabla u_\lambda,\nabla
     \psi_\ep\rangle\,\d x \\
     & \qquad\quad
    + q(p+q)
    \int_{\Omega_\lambda}w_\lambda^+\cdot\psi_\ep^{p+q-1}\cdot a(x)\,\langle
     |\nabla u|^{q-2} \nabla u - |\nabla u_\lambda|^{q-2}\nabla u_\lambda,\nabla
     \psi_\ep\rangle\,\d x \\
		& \quad = \int_{\Omega_\lambda} (f(u)-f(u_\lambda))\,w_\lambda^+\,\psi_\ep^{p+q}\,\d x.
	\end{split}
  \end{equation}
  We now observe that the integral in the
  right-hand side of \eqref{eq.identityIntermediate} is actually
  performed on the set
  $\mathcal{O}_\lambda := \{x\in\Omega_\lambda:\,u\geq u_\lambda\}\setminus R_\lambda(\Gamma)$;
  moreover, for every $x\in\mathcal{O}_\lambda$ we have
  $$0\leq u_\lambda(x)\leq u(x)\leq \|u\|_{L^\infty(\Omega_\lambda)}.$$
  As a consequence, since $f$ is locally Lipschitz-continuous on $\R$, we have
  \begin{equation} \label{eq.estimfLipschitz}
   \begin{split}
    \int_{\Omega_\lambda} (f(u)-f(u_\lambda))\,w_\lambda^+\,\psi_\ep^{p+q}\,\d x
   &= \int_{\Omega_\lambda} \frac{f(u)-f(u_\lambda)}{u-u_\lambda}\,(w_\lambda^+)^2\,\psi_\ep^{p+q}\,\d x
   \\
   & \leq L\int_{\Omega_\lambda}
   (w_\lambda^+)^2\,\psi_\ep^{p+q}\,\d x,
   \end{split}
  \end{equation}
  where $L = L(f,u,\lambda)> 0$ 
  is the Lipschitz constant of $f$ on 
  the interval $[0,\|u\|_{L^\infty(\Omega_\lambda)}]\subseteq\R$.
  Starting from \eqref{eq.identityIntermediate} and
  using both \eqref{eq.estimfLipschitz} and the estimates in \eqref{eq:inequalities}, we then get
  \begin{equation} \label{eq.tostartfrom}
   \begin{split}
   & C_1\int_{\Omega_\lambda}\psi_\ep^{p+q}\,\big\{
   p(|\nabla u|+|\nabla u_\lambda|)^{p-2}
   + q a(x)\,(|\nabla u|+|\nabla u_\lambda|)^{q-2}\big\}
   \cdot |\nabla w_\lambda^+|^2\,\d x \\
   & \qquad 
   \leq p\int_{\Omega_\lambda}
     \psi_\ep^{p+q}\,
     \langle |\nabla u|^{p-2} \nabla u - |\nabla u_\lambda|^{p-2}\nabla u_\lambda,\nabla
     w_\lambda^+\rangle\,\d x
     \\
     & \qquad\quad+ q
     \int_{\Omega_\lambda}\psi_\ep^{p+q}\, a(x)\,\langle
     |\nabla u|^{q-2} \nabla u - |\nabla u_\lambda|^{q-2}\nabla u_\lambda,
		\nabla w_\lambda^+\rangle\,\d x \\
    & \qquad \leq p(p+q)
    \int_{\Omega_\lambda}
     w_\lambda^+\,\psi_\ep^{p+q-1}\cdot
     \big| |\nabla u|^{p-2} \nabla u - |\nabla u_\lambda|^{p-2}\nabla u_\lambda\big|\,
     |\nabla
     \psi_\ep|\,\d x \\
     & \qquad\quad
    + q(p+q)
    \int_{\Omega_\lambda}w_\lambda^+\,\psi_\ep^{p+q-1}\cdot a(x)\,\big|
     |\nabla u|^{q-2} \nabla u - |\nabla u_\lambda|^{q-2}\nabla u_\lambda
     \big|\,|\nabla \psi_\ep|\,\d x \\
     & \qquad\quad 
     +L_f\,\int_{\Omega_\lambda}
   (w_\lambda^+)^2\,\psi_\ep^{p+q}\,\d x \\
   & \qquad
   \leq C_2 \, p(p+q) \int_{\Omega_{\lambda}}|\nabla w_{\lambda}^+|^{p-1}\psi_\ep^{p+q-1}|\nabla \psi_\ep| w_{\lambda}^{+}\, \d x \\
   &\qquad \quad + C_2 \, q(p+q) \int_{\Omega_{\lambda}}a(x)|\nabla w_{\lambda}^+|^{q-1}\psi_\ep^{p+q-1}|\nabla \psi_\ep| w_{\lambda}^{+}\, \d x \\
   & \qquad\quad 
     +L_f\,\int_{\Omega_\lambda}
   (w_\lambda^+)^2\,\psi_\ep^{p+q}\,\d x \\
   & \qquad \leq C_0\,\bigg(I_p+I_q+\int_{\Omega_\lambda}
   (w_\lambda^+)^2\,\psi_\ep^{p+q}\,\d x\bigg),
   \end{split}
  \end{equation}
  where $C_0 = C_0(p,q,\lambda,\|u\|_{L^\infty(\Omega)},f) > 0$
  is a suitable constant and
  \begin{equation*}
   \begin{split}
   & I_p := \int_{\Omega_\lambda}
   |\nabla w_{\lambda}^+|^{p-1}\psi_\ep^{p+q-1}|\nabla \psi_\ep| w_{\lambda}^{+}\, \d x, \\[0.1cm]
   &  I_q := \int_{\Omega_\lambda}
   a(x)|\nabla w_{\lambda}^+|^{q-1}\psi_\ep^{p+q-1}|\nabla \psi_\ep| w_{\lambda}^{+}\, \d x.
   \end{split}
  \end{equation*}
  We now proceed to estimate both $I_p$ and $I_q$. 
  
  \vspace*{0.08cm}
  
   To begin with, we split the set $\Omega_\lambda$ as 
  $\Omega_\lambda = \Omega^{(1)}_\lambda\cup\Omega^{(2)}_\lambda$, where
  \begin{align*}
  & \Omega^{(1)}_\lambda = \{x\in\Omega_\lambda\setminus R_\lambda(\Gamma):
   \,|\nabla u_\lambda(x)|
   < 2|\nabla u|\}\qquad\text{and} \\[0.08cm]
  & \qquad \Omega^{(2)}_\lambda = \{
   x\in\Omega_\lambda\setminus R_\lambda(\Gamma):\,|\nabla u_\lambda(x)|
   \geq 2|\nabla u|\};
   \end{align*}
  accordingly, 
  since Remark \ref{rem:capacityprop}-(2) ensures that 
  $|R_\lambda(\Gamma)| = 0$,  we write 
  $$I_p = I_{p,1}+I_{p,2},\qquad\text{with
  $I_{p,i} = \int_{\Omega^{(i)}_\lambda}
   \{\cdots\}\,\d x \quad(i = 1,2)$}.$$
  We then proceed by estimating $I_{p,1},\,I_{p,2}$ separately.
  \medskip
  
  \textsc{Step I: Estimate of $I_{p,1}$}. By definition, for every
  $x\in \Omega_\lambda^{(1)}$ we have
  \begin{equation} \label{eq.estimInOmegafirst}
   |\nabla u_\lambda(x)|+|\nabla u(x)| < 3|\nabla u(x)|;
  \end{equation}
  Using the H\"{o}lder inequality with conjugate exponents $(\tfrac{p}{p-1},p)$ and \eqref{eq.estimInOmegafirst}, we get
\begin{equation} \label{eq.estimI1ptouse} 
  \begin{aligned}
   I_{p,1} & \leq \left( \int_{\Omega^{(1)}_\lambda} |\nabla w_{\lambda}^+|^p 
   \psi_\ep^{\frac{p(p+q-1)}{p-1}}\, \d x\right)^{\tfrac{p-1}{p}} 
   \left(\int_{\Omega^{(1)}_\lambda} |\nabla \psi_\ep|^p (w_{\lambda}^+)^p \, \d x\right)^{\tfrac{1}{p}}\\
   &\leq \left( \int_{\Omega^{(1)}_\lambda} \left( |\nabla u| + |\nabla u_{\lambda}|\right)^p 
   \psi_\ep^{\frac{p(p+q-1)}{p-1}}\, \d x\right)^{\tfrac{p-1}{p}} \left(\int_{\Omega^{(1)}_\lambda} |\nabla \psi_\ep|^p (w_{\lambda}^+)^p \, \d x\right)^{\tfrac{1}{p}}\\
   &\leq 3^{p-1} \left( \int_{\Omega^{(1)}_\lambda} |\nabla u|^p 
   \psi_\ep^{\frac{p(p+q-1)}{p-1}}\, \d x\right)^{\tfrac{p-1}{p}} \left(\int_{\Omega^{(1)}_\lambda} |\nabla \psi_\ep|^p (w_{\lambda}^+)^p \, \d x\right)^{\tfrac{1}{p}}\\
   &\leq C \left( \int_{\Omega_\lambda} |\nabla u|^p \, \d x\right)^{\tfrac{p-1}{p}} \left(\int_{\Omega_\lambda}\left( |\nabla \psi_\ep|^p +a(x) |\nabla \psi_\ep|^q \right) \, \d x\right)^{\tfrac{1}{p}},
  \end{aligned}
  \end{equation}
  \vspace*{0.05cm}
  where $C = C(p,\lambda,\|u\|_{L^\infty(\Omega_\lambda)}) > 0$. 
    \medskip
  
  \textsc{Step II: Estimate of $I_{p,2}$}.
  By definition, for every $x\in\Omega_\lambda^{(2)}$ we have
  \begin{equation} \label{eq.estimInOmegasec}
  \frac{1}{2}|\nabla u_\lambda| \leq 
   |\nabla u_\lambda|-|\nabla u|
   \leq |\nabla w_\lambda| \leq |\nabla u_\lambda|+|\nabla u|
   \leq \frac{3}{2}|\nabla u_\lambda|.
  \end{equation}
  Using the weighted
  Young i\-ne\-qua\-li\-ty with conjugate exponents $(\tfrac{p}{p-1},p)$ and \eqref{eq.estimInOmegasec},   
  for every $\rho > 0$ we get
  \begin{equation} \label{eq.estimI2ptouse}
  \begin{aligned}
   I_{p,2} & \leq C\rho \int_{\Omega^{(2)}_\lambda} |\nabla w_\lambda^+|^p 
   \psi_\ep^{\frac{p(p+q-1)}{p-1}}\,\d x + \dfrac{C}{\rho^{p-1}}
   \int_{\Omega^{(2)}_\lambda} |\nabla \psi_\ep|^p (w_{\lambda}^+)^p \, \d x   \\
   & \leq C\rho \int_{\Omega^{(2)}_\lambda} 
   \left( |\nabla u| + |\nabla u_{\lambda}|\right)^{p}
   \psi_\ep^{\frac{p(p+q-1)}{p-1}}\,\d x + \dfrac{C}{\rho^{p-1}}
   \int_{\Omega^{(2)}_\lambda} |\nabla \psi_\ep|^p (w_{\lambda}^+)^p \, \d x   \\
   &\leq C\rho \int_{\Omega^{(2)}_\lambda} \left( |\nabla u| + |\nabla u_{\lambda}|\right)^{p-2}  |\nabla u_{\lambda}|^{2}\psi_\ep^{\frac{p(p+q-1)}{p-1}}\,\d x + 
   \dfrac{C}{\rho^{p-1}} \int_{\Omega^{(2)}_\lambda} |\nabla \psi_\ep|^p (w_{\lambda}^+)^p \, \d x   \\
   &\leq C\rho \int_{\Omega^{(2)}_\lambda} \left( |\nabla u| + |\nabla u_{\lambda}|\right)^{p-2}  |\nabla u_{\lambda}|^{2}\psi_\ep^{\frac{p(p+q-1)}{p-1}}\,\d x + 
   \dfrac{C}{\rho^{p-1}} \int_{\Omega^{(2)}_\lambda} |\nabla \psi_\ep|^p (w_{\lambda}^+)^p \, \d x   \\
   &\leq C\rho \int_{\Omega_{\lambda}} \left( p \left( |\nabla u|+ |\nabla u_{\lambda}|\right)^{p-2}+qa(x)\left(|\nabla u|+ |\nabla u_{\lambda}|\right)^{q-2}\right)|\nabla w_{\lambda}^+|^2 \psi_\ep^{p+q}\,\d x \\
   &\qquad \qquad +\dfrac{C}{\rho^{p-1}}
   \int_{\Omega_{\lambda}} \left( |\nabla \psi_\ep|^p +a(x) |\nabla \psi_\ep|^q \right) \, \d x,
  \end{aligned}
  \end{equation}
   where $C = C(p,\lambda,\|u\|_{L^\infty(\Omega_\lambda)}) > 0$. Arguing similarly, just keeping 
  track of the term $a(x)$, we get an analogous estimate for $I_q$, which 
  reads as follows: for every $\rho >0$,
  \begin{equation}\label{eq.estimFinalIq}
  \begin{aligned}
  I_q &\leq C \left( \int_{\Omega_{\lambda}}|\nabla u|^{q}\, \d x\right)^{\tfrac{q-1}{q}} \left( \int_{\Omega_{\lambda}}   \left(|\nabla \psi_\ep|^p + a(x) |\nabla \psi_\ep|^q\right)\, \d x\right)^{\tfrac{1}{q}} \\
  &\qquad + C \rho \int_{\Omega_{\lambda}} \left( p \left( |\nabla u|+ |\nabla u_{\lambda}|\right)^{p-2}+qa(x)\left(|\nabla u|+ |\nabla u_{\lambda}|\right)^{q-2}\right)|\nabla w_{\lambda}^+|^2 \psi_\ep^{p+q}\,\d x \\
   &\qquad \qquad +\dfrac{C}{\rho^{q-1}}
   \int_{\Omega_{\lambda}} \left( |\nabla \psi_\ep|^p +a(x) |\nabla \psi_\ep|^q \right) \, \d x,
  \end{aligned}
  \end{equation}
   where $C = C(p,\lambda,\|u\|_{L^\infty(\Omega_\lambda)},\|a\|_{L^\infty(\Omega_\lambda)}) > 0$.
  \medskip
  
  Going back to \eqref{eq.tostartfrom}, and gathering \eqref{eq.estimI1ptouse},
  \eqref{eq.estimI2ptouse} and \eqref{eq.estimFinalIq}, we finally derive
  \begin{equation*}
  \begin{aligned}
   C_1 &\int_{\Omega_\lambda}\psi_\ep^{p+q}\,\big\{
   p(|\nabla u|+|\nabla u_\lambda|)^{p-2}
   + q a(x)\,(|\nabla u|+|\nabla u_\lambda|)^{q-2}\big\}
   \cdot |\nabla w_\lambda^+|^2\,\d x \\
   &\leq C\left( \int_{\Omega_{\lambda}}|\nabla u|^{p}\, \d x\right)^{\tfrac{p-1}{p}}
   \left( \int_{\Omega_{\lambda}}\left(|\nabla \psi_\ep|^p + 
   a(x) |\nabla \psi_\ep|^q\right)\, \d x\right)^{\tfrac{1}{p}} \\
   & \quad
   + C\left( \int_{\Omega_{\lambda}}|\nabla u|^{q}\, \d x\right)^{\tfrac{q-1}{q}}
   \left( \int_{\Omega_{\lambda}}\left(|\nabla \psi_\ep|^p +
    a(x) |\nabla \psi_\ep|^q\right)\, \d x\right)^{\tfrac{1}{q}} \\
   &\quad
   +C\rho \int_{\Omega_{\lambda}} \big\{p \left( |\nabla u|+ |\nabla u_{\lambda}|\right)^{p-2}+qa(x)\left(|
   \nabla u|+ |\nabla u_{\lambda}|\right)^{q-2}\big\}
   |\nabla w_{\lambda}^+|^2 \psi_\ep^{p+q}\,\d x \\
   &\quad 
   +C\bigg(\frac{1}{\rho^{p-1}}+\frac{1}{\rho^{q-1}}\bigg)\int_{\Omega_{\lambda}} 
   \left( |\nabla \psi_\ep|^p +a(x) |\nabla \psi_\ep|^q \right) \, \d x
   + C_0\int_{\Omega_\lambda}
   (w_\lambda^+)^2\,\psi_\ep^{p+q}\,\d x,
   \end{aligned}
  \end{equation*}
  \noindent for every $\rho>0$ and for a suitable positive constant $C$ only depending
  on $p,\lambda$, $\|u\|_{L^\infty(\Omega_\lambda)}$ and $\|a\|_{L^\infty(\Omega_\lambda)}$.
 We are finally ready
  to conclude the proof: in fact, choosing
  $\rho > 0$ in such a way that
  $$C_1-C \rho< \frac{1}{2},$$
  and letting $\varepsilon\to 0$ with the aid of Fatou's lemma
  (remind the properties (1)-to-(4) of the function
  $\psi_\ep$ and that
  $u\in C^1(\overline{\Omega}_\lambda)$ if $\lambda < 0$), we obtain
  $$\int_{\Omega_\lambda} \big( p(|\nabla u|+|\nabla u_\lambda|)^{p-2}
   + q a(x)\,(|\nabla u|+|\nabla u_\lambda|)^{q-2}\big)
   |\nabla w_\lambda^+|^2\,\d x
   \leq 
   C_0\!\int_{\Omega_\lambda}
   (w_\lambda^+)^2\,\d x =: \mathbf{c}_0,$$
   and $\mathbf{c}_0$ depends only on $p,q,\lambda$ and the $L^{\infty}$-norm of $u$.
   This is ends the proof.
 \end{proof}
 Another key tool for the proof of Theorem \ref{thm:symmetry} is the upcoming Lemma \ref{conncomp} which has been proved in \cite[Lemma 2.5]{BEV} {\em for every} $1<p<q<N$. To better understand this result, we first introduce
 a notation: for every fixed
 $\lambda\in(\mathbf{a},0)$, we define 
 \begin{equation*}
  \mathcal{Z}_\lambda := \big\{x\in\Omega_\lambda\setminus R_\lambda(\Gamma):\,
  \nabla u(x) = \nabla u_\lambda(x) = 0\big\}.
 \end{equation*}
 We also notice that, 
 since $u,u_\lambda\in C^1(\overline{\Omega}_\lambda\setminus R_\lambda(\Gamma))$,
 the set $\mathcal{Z}_\lambda$ is closed (in $\Omega_\lambda$).
 \begin{lem}\label{conncomp}
  Let $\lambda\in (\mathbf{a},0)$ and let $\mathcal{C}_\lambda \subseteq \Omega_\lambda \setminus
	(R_\lambda(\Gamma) \cup \mathcal{Z}_\lambda)$ be a
	\emph{connected component} of \emph{(}the open set\emph{)}
	$\Omega_\lambda \setminus
	(R_\lambda(\Gamma) \cup \mathcal{Z}_\lambda)$. 	
	If 
	$u \equiv u_\lambda$ in $\mathcal{C}_\lambda$, then 
	$$\mathcal{C}_\lambda = \emptyset.$$
 \end{lem}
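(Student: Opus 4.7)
The plan is to argue by contradiction: assume $\mathcal{C}_\lambda \neq \emptyset$ and derive a clash with the boundary conditions. The first preparatory step is to upgrade the pointwise identity $u \equiv u_\lambda$ on $\mathcal{C}_\lambda$ to an identity on $\overline{\mathcal{C}_\lambda}\setminus R_\lambda(\Gamma)$, using that both $u$ and $u_\lambda$ lie in $C^1(\overline{\Omega}_\lambda\setminus R_\lambda(\Gamma))$ so that continuity at non-singular points is available.

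Next I would dissect the topological frontier of $\mathcal{C}_\lambda$. Since $\mathcal{C}_\lambda$ is a connected component of the open set $\Omega_\lambda\setminus(R_\lambda(\Gamma)\cup\mathcal{Z}_\lambda)$, its frontier relative to $\Omega_\lambda$ is forced to sit inside the closed set $R_\lambda(\Gamma)\cup\mathcal{Z}_\lambda$; the remaining part of $\partial\mathcal{C}_\lambda$ must therefore lie on $\partial\Omega_\lambda = \bigl(\partial\Omega\cap\{x_1\le\lambda\}\bigr)\cup\bigl(\overline{\Omega}\cap\Pi_\lambda\bigr)$. The decisive step is to produce a point $p\in\partial\mathcal{C}_\lambda\cap\partial\Omega$ whose reflection $p_\lambda$ lies in $\Omega\setminus\Gamma$. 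At such a point we would have $u(p)=0$ by the Dirichlet condition and $u_\lambda(p)=u(p_\lambda)>0$ by the strict positivity of $u$ on $\Omega\setminus\Gamma$, directly contradicting the continuous extension $u\equiv u_\lambda$ on $\overline{\mathcal{C}_\lambda}\setminus R_\lambda(\Gamma)$.

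To produce such a boundary point, I would rule out the two alternative scenarios. First, $\partial\mathcal{C}_\lambda$ cannot be entirely contained in $R_\lambda(\Gamma)\cup\mathcal{Z}_\lambda$: by Remark \ref{rem:capacityprop}-(2), $R_\lambda(\Gamma)$ has vanishing classical $p$-capacity (in particular vanishing Lebesgue measure), hence it is too thin to bound an open set from the inside; together with the openness of $\Omega_\lambda$, this forces $\partial\mathcal{C}_\lambda$ to meet $\partial\Omega_\lambda$. Second, if $\partial\mathcal{C}_\lambda$ met only the ``mirror'' $\Pi_\lambda\cap\overline{\Omega}$ (on which $u\equiv u_\lambda$ is trivial and yields no contradiction), then reflecting $\mathcal{C}_\lambda$ across $\Pi_\lambda$ and using the convexity of $\Omega$ in the $x_1$-direction would yield an enlarged connected region on which $u\equiv u_\lambda$; propagating the identity across $\Pi_\lambda$ in this way, one is eventually forced to reach $\partial\Omega\setminus\Gamma$, which brings us back to the fatal first scenario.

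The main obstacle is precisely this topological/capacitary step: rigorously showing that $\overline{\mathcal{C}_\lambda}$ must reach a ``good'' boundary point $p\in\partial\Omega\setminus\Gamma$ with $p_\lambda\in\Omega\setminus\Gamma$. The vanishing $(p,q)$-capacity of $\Gamma$ is crucial here, as it prevents $R_\lambda(\Gamma)$ from isolating $\mathcal{C}_\lambda$, while the convexity and symmetry of $\Omega$ together with the Dirichlet condition ultimately furnish the contradiction.
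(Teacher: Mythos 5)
Note first that the paper does not actually prove Lemma \ref{conncomp}: it is imported verbatim from \cite[Lemma 2.5]{BEV}, where it is established for every $1<p<q<N$, so your argument has to stand on its own. It does not: the decisive step fails. You claim that $\partial\mathcal{C}_\lambda$ must reach $\partial\Omega_\lambda$ because $R_\lambda(\Gamma)\cup\mathcal{Z}_\lambda$ is ``too thin to bound an open set from the inside''. The smallness you invoke (Remark \ref{rem:capacityprop}-(2)) concerns only $R_\lambda(\Gamma)$; the critical set $\mathcal{Z}_\lambda=\{\nabla u=\nabla u_\lambda=0\}$ is merely a relatively closed set, carries no capacity bound, and even a Lebesgue-null set (a sphere, say) can enclose an open component. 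So nothing you wrote excludes the case $\partial\mathcal{C}_\lambda\cap\Omega_\lambda\subseteq\mathcal{Z}_\lambda\cup R_\lambda(\Gamma)$ with $\mathcal{C}_\lambda$ never touching $\partial\Omega$ --- which is exactly the nontrivial case of the lemma. The ``mirror'' scenario is also unresolved: reflecting across $\Pi_\lambda$ produces the symmetric set $\mathcal{C}_\lambda\cup R_\lambda(\mathcal{C}_\lambda)$, but the identity $u\equiv u_\lambda$ does not ``propagate'' beyond it, so there is no mechanism forcing contact with $\partial\Omega\setminus\Gamma$. (Even in the boundary-contact case you would still need $p_\lambda\in\Omega$, which may fail on portions of $\partial\Omega$ parallel to the $x_1$-axis, since $\Omega$ is convex but not assumed strictly convex.)

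The gap is structural rather than cosmetic: your argument uses only continuity, $u>0$ in $\Omega\setminus\Gamma$, $u=0$ on $\partial\Omega$ and $\mathrm{Cap}_{p,q}(\Gamma)=0$, and never the equation or the sign condition on $f$. With those ingredients alone the statement is false. Take $\Gamma=\emptyset$, fix a small ball $B\subset\subset\Omega_\lambda$, and let $u$ be equal to a positive constant on $B\cup R_\lambda(B)$, augmented inside $B$ and $R_\lambda(B)$ by two mirror-symmetric nonnegative bumps supported in the concentric half-balls, and extended to a positive $C^1$ function vanishing on $\partial\Omega$. Then the annular regions where $u$ is constant lie in $\mathcal{Z}_\lambda$, and the bump region is a nonempty connected component of $\Omega_\lambda\setminus\mathcal{Z}_\lambda$ on which $u\equiv u_\lambda$. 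Such a $u$ is of course not a solution of \eqref{eq:Riey2} with $f>0$, and that is the point: any correct proof must use that $u$ solves the equation and that $f(s)>0$ for $s>0$ (this is what prevents a component from being enclosed by the critical set, via the divergence structure of the operator and the properties of $\mathcal{Z}_\lambda$ exploited in \cite{BEV}); this PDE input is entirely absent from your proposal.
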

 Finally, we will use the following key technical result.
 \begin{lem}\label{lem:SobolevSplitting}
	Let $1<p<q<2$ be fixed, and
	let $\lambda\in (\mathbf{a},0)$  Then, it is possible 
	to find a constant $\mathbf{c}  > 0$ such that the following estimate holds:
	\begin{equation*}
		\int_{\Omega_\lambda} |\nabla w_{\lambda}^+|^p\, dx \leq  \mathbf{c} \ \left(\int_{\Omega_\lambda} (|\nabla u| + |\nabla u_{\lambda}|)^{p-2}| \nabla w_{\lambda}^+|^2 \, dx\right)^{\frac{p}{2}}.
	\end{equation*}
	Here, $u_\lambda$ and $w_\lambda$ are as in
	\eqref{eq.defulambda} and \eqref{eq:defwlambda}, respectively.
\end{lem}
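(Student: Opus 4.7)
The proof follows the strategy from \cite{BEMV} and rests on a single clever application of H\"older's inequality. The starting point is the pointwise factorization
\begin{equation*}
|\nabla w_\lambda^+|^p = \Bigl[(|\nabla u|+|\nabla u_\lambda|)^{p-2}|\nabla w_\lambda^+|^2\Bigr]^{p/2}\cdot(|\nabla u|+|\nabla u_\lambda|)^{\frac{p(2-p)}{2}},
\end{equation*}
valid a.e.\ on $\{|\nabla u|+|\nabla u_\lambda|>0\}$: indeed the two exponents of $(|\nabla u|+|\nabla u_\lambda|)$ sum to $\tfrac{p(p-2)+p(2-p)}{2}=0$, so that factor disappears. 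On the complementary set one has $\nabla w_\lambda\equiv 0$, hence both sides vanish and the factorization integrates correctly.

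Applying H\"older's inequality with conjugate exponents $\tfrac{2}{p}$ and $\tfrac{2}{2-p}$ (both greater than $1$ since $1<p<2$) then yields
\begin{equation*}
\int_{\Omega_\lambda}|\nabla w_\lambda^+|^p\,\d x\leq\left(\int_{\Omega_\lambda}(|\nabla u|+|\nabla u_\lambda|)^{p-2}|\nabla w_\lambda^+|^2\,\d x\right)^{\!p/2}\!\left(\int_{\Omega_\lambda}(|\nabla u|+|\nabla u_\lambda|)^p\,\d x\right)^{\!(2-p)/2}\!.
\end{equation*}
To close the argument it suffices to bound the last factor by a constant depending only on the data. Since $\Gamma\subseteq\{x_1=0\}$ and $\lambda<0$, one has $u\in C^1(\overline{\Omega}_\lambda)$ and in particular $\int_{\Omega_\lambda}|\nabla u|^p\,\d x<\infty$. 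For the reflected piece, the convexity of $\Omega$ in the $x_1$-direction ensures $R_\lambda(\Omega_\lambda)\subseteq\Omega$, so the change of variable $y=R_\lambda(x)$ gives
\begin{equation*}
\int_{\Omega_\lambda}|\nabla u_\lambda|^p\,\d x=\int_{R_\lambda(\Omega_\lambda)}|\nabla u|^p\,\d y\leq\int_{\Omega}|\nabla u|^p\,\d y.
\end{equation*}
This last quantity is finite because $u\in W^{1,p}(\Omega)$, a fact that follows (under the standing hypothesis $p>\tfrac{2N}{N+2}$ and the capacitary assumption $\Cp_{p,q}(\Gamma)=0$) by testing \eqref{debil1} against $T_k(u)\,\psi_\ep^{p+q}$, exploiting Young's inequality together with Remark \ref{rem:capacityprop}-(2), and letting $\ep\to 0$ and $k\to\infty$.

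The only delicate step in the plan is the last one, namely the global $L^p$-integrability of $|\nabla u|$ up to the singular set $\Gamma$: without this piece of regularity the reflected term $\int_{\Omega_\lambda}|\nabla u_\lambda|^p\,\d x$ might be infinite, since $R_\lambda(\Omega_\lambda)$ touches $\Gamma$ for every $\lambda\in(\mathbf{a},0)$. Once such a bound is secured, the rest is the purely algebraic factorization above followed by one H\"older estimate, and the constant $\mathbf{c}$ depends only on $p,q,\lambda,\|u\|_{L^\infty(\Omega_\lambda)}$ and $\|a\|_{L^\infty(\Omega)}$, as required.
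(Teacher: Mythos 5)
Your Hölder factorization is the right starting move, but the way you close the argument contains a genuine gap: the boundedness of the second factor $\int_{\Omega_\lambda}(|\nabla u|+|\nabla u_\lambda|)^p\,\d x$. Since $\Gamma\subseteq\{x_1=0\}$ and $\lambda<0$, the term $\int_{\Omega_\lambda}|\nabla u|^p\,\d x$ is indeed harmless, but $R_\lambda(\Omega_\lambda)$ reaches across $\{x_1=0\}$ whenever $\lambda>\mathbf{a}/2$, so controlling $\int_{\Omega_\lambda}|\nabla u_\lambda|^p\,\d x$ requires $p$-summability of $\nabla u$ \emph{across the singular set} $\Gamma$. This is not part of Definition \ref{def:solution} (the solution is only $C^1$ away from $\Gamma$ and tested against functions supported away from $\Gamma$), and your proposed Caccioppoli argument does not deliver it: testing \eqref{debil1} with $T_k(u)\,\psi_\ep^{p+q}$ produces the right-hand side $\int f(u)\,T_k(u)\,\psi_\ep^{p+q}\,\d x$, which is nonnegative and has no a priori bound, because $u$ may be unbounded near $\Gamma$, $f$ is only assumed locally Lipschitz and positive (no growth restriction), and $f(u)$ need not belong to $L^1$ near $\Gamma$. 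In other words, the quantity you need is exactly the kind of information on the behaviour of $\nabla u$ near $\Gamma$ that the paper declares out of reach (cf.\ the discussion of \cite{EMS} in the Introduction); the whole point of the \cite{BEMV}-type argument is to avoid it.

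The proof the paper has in mind runs the Hölder step only after the same splitting used in Lemma \ref{leaiuto}: write $\Omega_\lambda=\Omega^{(1)}_\lambda\cup\Omega^{(2)}_\lambda$ with $\Omega^{(1)}_\lambda=\{|\nabla u_\lambda|<2|\nabla u|\}$ and $\Omega^{(2)}_\lambda=\{|\nabla u_\lambda|\geq 2|\nabla u|\}$. On $\Omega^{(1)}_\lambda$ one has $(|\nabla u|+|\nabla u_\lambda|)^p\leq 3^p|\nabla u|^p$, which is integrable because $u\in C^1(\overline{\Omega}_\lambda)$ (here only $\lambda<0$ is used), and your factorization plus Hölder gives a bound by $\mathbf{c}\,W^{p/2}$ with $W:=\int_{\Omega_\lambda}(|\nabla u|+|\nabla u_\lambda|)^{p-2}|\nabla w_\lambda^+|^2\,\d x$. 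On $\Omega^{(2)}_\lambda$, on the support of $w_\lambda^+$ one has $|\nabla u|+|\nabla u_\lambda|\leq \tfrac32|\nabla u_\lambda|\leq 3|\nabla w_\lambda^+|$, whence the pointwise bound
\begin{equation*}
|\nabla w_\lambda^+|^p\;\leq\;3^{2-p}\,(|\nabla u|+|\nabla u_\lambda|)^{p-2}|\nabla w_\lambda^+|^2
\qquad\text{on }\Omega^{(2)}_\lambda,
\end{equation*}
so that $\int_{\Omega^{(2)}_\lambda}|\nabla w_\lambda^+|^p\,\d x\leq 3^{2-p}W$; finally, Lemma \ref{leaiuto} ($W\leq\mathbf{c}_0$) converts this into $3^{2-p}\mathbf{c}_0^{(2-p)/2}W^{p/2}$, which is why the paper stresses that Lemma \ref{leaiuto} must be taken into account. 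This route never needs $u\in W^{1,p}(\Omega)$, only the $C^1$ regularity of $u$ on $\overline{\Omega}_\lambda$ and the energy bound of Lemma \ref{leaiuto}; you should replace your last step by this splitting, or else supply a genuine proof of $\int_{\Omega}|\nabla u|^p\,\d x<\infty$, which under the stated hypotheses is not available.
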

 Taking into account the previous
 Lemma \ref{leaiuto}, 
 the proof of Lemma \ref{lem:SobolevSplitting} is totally analogous to that
 of \cite[Lemma 2.4]{BEMV}, and therefore we skip it.
\section{Proof of Theorem \ref{thm:symmetry}} \label{sec.mainresult1}

\begin{proof}[Proof of Theorem \ref{thm:symmetry}] 
By assumptions, the singular set $\Gamma$ is contained in the hyperplane $\{x_1 = 0\}$, then the moving plane procedure can be started in the standard way, see e.g \cite{EMS} for the $p$-laplacian case, by using the weak comparison principle in small domains, see \cite[Theorem 4.3]{riey}.
	Indeed, for $\mathbf{a}
	< \lambda < \mathbf{a} + \tau$ with $\tau>0$ small enough, the singularity does not play any role. Therefore, recalling that $w_\lambda$ has a
	singularity at $\Gamma$ and at $R_\lambda (\Gamma)$, we have that
	$w_\lambda \leq 0$ in $\Omega_\lambda$. To proceed
	further we define
	\begin{equation}\nonumber
	\Lambda_0=\{\mathbf{a}<\lambda<0 : u\leq
	u_{t}\,\,\,\text{in}\,\,\,\Omega_t\setminus
	R_t(\Gamma)\,\,\,\text{for all $t\in(\mathbf{a},\lambda]$}\}
	\end{equation}
	and $\lambda_0 = \sup \Lambda_0$, since we proved above that $\Lambda_0$ is not empty. To prove our result we have to
	show that $\lambda_0 = 0$.
	To do this we
	assume that $\lambda_0 < 0$ and we reach a contradiction by proving
	that $u \leq u_{\lambda_0 + \tau}$ in $\Omega_{\lambda_0 + \tau}
	\setminus R_{\lambda_0 + \tau} (\Gamma)$ for any $0 < \tau <
	\bar{\tau}$ for some small $\bar{\tau}>0$. We  remark that
	$|\mathcal{Z}_{\lambda_0}|=0$, see \cite{DS1, riey}.
	Let us take $\mathcal{H}_{\lambda_0}\subset \Omega_{\lambda_0}$ be an open set such that 
	$$\mathcal Z_{\lambda_0}\cap\Omega_{\lambda_0}\subset
	\mathcal{H}_{\lambda_0} \subset \subset \Omega.$$ 
	We note that the existence of such
	 a set is guaranteed by the Hopf lemma, see, e.g., \cite[Theorem A.2]{BEV}. 
	 Moreover note  that, since $|\mathcal Z_{\lambda_0}|=0$, we can take $\mathcal{H}_{\lambda_0}$ of arbitrarily small measure. By
	continuity we know that $u \leq u_{\lambda_0}$ in $\Omega_{\lambda_0}
	\setminus R_{\lambda_0} (\Gamma)$.
	We can exploit the classical strong comparison principle 
	(see \cite[Theorem 1]{Serrin70} and \cite[Theorem A.1]{BEV})
	to get that, in any connected component of 
	$\Omega_{\lambda_{0}}\setminus \mathcal Z_{\lambda_0}$, we have
	$$
	u<u_{\lambda_0} \qquad\text{or}\qquad u\equiv u_{\lambda_0}.$$
	The case $u\equiv u_{\lambda_0}$ in some  connected component
	$\mathcal{C}_{\lambda_{0}}$ of $\Omega_{\lambda_{0}}\setminus \mathcal Z_{\lambda_0}$ is not
	possible, since by symmetry, it would imply the existence of  a local symmetry phenomenon and consequently that $\Omega \setminus \mathcal  Z_{\lambda_0}$ would be not connected,  in  spite of what we proved in Lemma \ref{conncomp}. Hence we deduce that $u <
	u_{\lambda_0}$ in $\Omega_{\lambda_0} \setminus R_{\lambda_0}
	(\Gamma)$. Therefore, given a compact set $\mathcal{K} \subset
	\Omega_{\lambda_0} \setminus (R_{\lambda_0} (\Gamma)\cup \mathcal{H}_{\lambda_0})$, by
	uniform continuity we can ensure that $u < u_{\lambda_0+\tau}$ in
	$\mathcal{K}$ for any $0 < \tau < \bar{\tau}$ for some small $\bar{\tau}>0$.
	Note that to do this we implicitly assume, with no loss of
	generality, that $R_{\lambda_0} (\Gamma)$ remains bounded away
	from $\mathcal{K}$. 
	
	Arguing in a similar fashion as in Lemma \ref{leaiuto}, we
	consider
	\begin{equation*}
	\varphi_\varepsilon := w^+_{\lambda_0 + \tau} 
	\psi_\varepsilon^{p+q}\cdot\mathbf{1}_{\Omega_{\lambda_0 + \tau}}
	= \begin{cases}
	w^+_{\lambda_0 + \tau} 
	\psi_\varepsilon^{p+q}, & \text{in $\Omega_{\lambda_0+\tau}$}, \\
	0, & \text{otherwise}.
	\end{cases}
	\end{equation*}
	By density arguments as above, we plug $\varphi_\varepsilon$ as test
	function in \eqref{debil1} and \eqref{eq.PDEulambda} so that,
	subtracting, we get
	
	\begin{equation} \label{eq:Step1}
	\begin{split} &\int_{\Omega_{\lambda_0 + \tau} \setminus \mathcal{K}} \langle|\nabla u|^{p-2} \nabla u - |\nabla u_{\lambda_0 + \tau}|^{p-2}\nabla u_{\lambda_0 + \tau},
	\nabla w^+_{\lambda_0 + \tau}\rangle \, \psi_\varepsilon^{p+q} \, \d x\\
	&+\int_{\Omega_{\lambda_0 + \tau} \setminus \mathcal{K}} \langle|\nabla u|^{q-2} \nabla u
	- |\nabla u_{\lambda_0 + \tau}|^{q-2} \nabla u_{\lambda_0 + \tau},
	\nabla w^+_{\lambda_0 + \tau}\rangle \, \psi_\varepsilon^{p+q} \, \d x\\
	&+ (p+q) \int_{\Omega_{\lambda_0 + \tau} \setminus \mathcal{K}} \langle|\nabla u|^{p-2}
	\nabla u - |\nabla u_{\lambda_0 + \tau}|^{p-2} \nabla u_{\lambda_0 +
		\tau}, \nabla \psi_\varepsilon\rangle \, \psi_\varepsilon^{p+q-1} w_{\lambda_0 + \tau}^+ \, \d x \\
	&+ (p+q) \int_{\Omega_{\lambda_0 + \tau} \setminus \mathcal{K}} \langle|\nabla u|^{q-2}
	\nabla u - |\nabla u_{\lambda_0 + \tau}|^{q-2} \nabla u_{\lambda_0 +
		\tau}, \nabla \psi_\varepsilon\rangle \,	\psi_\varepsilon^{p+q-1} w_{\lambda_0 + \tau}^+ \, \d x \\
	&= \int_{\Omega_{\lambda_0 + \tau} \setminus \mathcal{K}} (f(u)-f(u_\lambda))
	w_{\lambda_0 + \tau}^+ \psi_\varepsilon^{p+q} \, \d x.
	\end{split}
	\end{equation}
	Now we split the set $\Omega_{\lambda_0 + \tau}
	\setminus \mathcal{K}$ as the union of two disjoint subsets $\Omega^{(1)}_{\lambda_0 +
	\tau}$ and $\Omega^{(2)}_{\lambda_0 + \tau}$ such that $\Omega_{\lambda_0 +\tau} \setminus \mathcal{K}= \Omega^{(1)}_{\lambda_0 + \tau} \cup \Omega^{(2)}_{\lambda_0 +	\tau}$.
	In particular, we set
	\begin{equation}\nonumber
	\begin{split}
	 \Omega^{(1)}_{\lambda_0 +	\tau} &:= \{ x \in \Omega_{\lambda_0 + \tau} \setminus
	\mathcal{K} \ : \ |
	\nabla u_{\lambda_0 + \tau} (x)| < 2| \nabla u (x)|  \}\quad
	\text{and}\\ \\
	\Omega^{(2)}_{\lambda_0 +	\tau} &:= \{ x \in \Omega_{\lambda_0 + \tau} \setminus
	\mathcal{K} \ : \ | \nabla u_{\lambda_0 + \tau} (x)| \geq  2| \nabla u
	(x)|\}.
	\end{split}
	\end{equation}
	From \eqref{eq:Step1} and using \eqref{eq:inequalities}, repeating verbatim arguments along the proof of Lemma \ref{leaiuto}, we get
	
\begin{equation}\label{eq:final}
\begin{split}
\int_{\Omega_{\lambda_0+\tau} \setminus \mathcal{K}} &\left( p(|\nabla u|+|\nabla u_{\lambda_0+\tau}|)^{p-2} + q a(x)\,(|\nabla u|+|\nabla u_{\lambda_0+\tau}|)^{q-2}\right)|\nabla w_{\lambda_0+\tau}^+|^2 \, \d x  \\[0.1cm]
\leq& C_0 \int_{\Omega_{\lambda_0 + \tau} \setminus \mathcal{K}} (w_{\lambda_0	+ \tau}^+)^2 \, \d x,
\end{split}
\end{equation}
where $C_0=C_0(p,q,\lambda_0, \tau, \|u\|_{L^\infty(\Omega)},f)$. Clearly, the left hand side can estimate from below as follows
\begin{equation}\label{eq:LHS}
\begin{split}
&\int_{\Omega_{\lambda_0+\tau} \setminus \mathcal{K}}(|\nabla u|+|\nabla u_{\lambda_0+\tau}|)^{p-2}|\nabla w_{\lambda_0+\tau}^+|^2 \, \d x \\
&\leq \int_{\Omega_{\lambda_0+\tau} \setminus \mathcal{K}}\left( p(|\nabla u|+|\nabla u_{\lambda_0+\tau}|)^{p-2} + q a(x)\,(|\nabla u|+|\nabla u_{\lambda_0+\tau}|)^{q-2}\right)|\nabla w_{\lambda_0+\tau}^+|^2 \, \d x.
\end{split}
\end{equation}
For the right hand side, we apply H\"{o}lder's inequality with  exponents $\left( \tfrac{p^{\ast}-2}{p^{\ast}}, \tfrac{p^{\ast}}{2}\right)$,
the clas\-sical Sobolev inequality and Lemma \ref{lem:SobolevSplitting}: this gives
\begin{equation}\label{eq:RHS}
\begin{split}
C_0 &\int_{\Omega_{\lambda_0 + \tau} \setminus \mathcal{K}} (w_{\lambda_0	+ \tau}^+)^2 \, \d x \leq C |\Omega_{\lambda_0 + \tau} \setminus \mathcal{K}|^{\tfrac{p^{\ast}-2}{p^{\ast}}} \left( \int_{\Omega_{\lambda_0 + \tau} \setminus \mathcal{K}}|w_{\lambda_0 + \tau}^+|^{p^{\ast}}\, \d x \right)^{\tfrac{2}{p^{\ast}}}\\
&\leq C |\Omega_{\lambda_0 + \tau} \setminus \mathcal{K}|^{\tfrac{p^{\ast}-2}{p^{\ast}}} \left( \int_{\Omega_{\lambda_0 + \tau} \setminus \mathcal{K}}|\nabla w_{\lambda_0 + \tau}^+|^{p}\, \d x \right)^{\tfrac{2}{p}}\\
&\leq C |\Omega_{\lambda_0 + \tau} \setminus \mathcal{K}|^{\tfrac{p^{\ast}-2}{p^{\ast}}} \int_{\Omega_{\lambda_0 + \tau} \setminus \mathcal{K}} \left( |\nabla u|+ |\nabla u_{\lambda}|\right)^{p-2}|\nabla w_{\lambda}^+|^2 \, \d x.
\end{split}
\end{equation}
Gathering together \eqref{eq:final}, \eqref{eq:LHS} and \eqref{eq:RHS}, we get
\begin{equation} \label{eq:estimkey}
\begin{split}
&\int_{\Omega_{\lambda_0+\tau} \setminus \mathcal{K}}(|\nabla u|+|\nabla u_{\lambda_0+\tau}|)^{p-2}|\nabla w_{\lambda_0+\tau}^+|^2 \, \d x \\
&\qquad \leq C |\Omega_{\lambda_0 + \tau} \setminus \mathcal{K}|^{\tfrac{p^{\ast}-2}{p^{\ast}}} \int_{\Omega_{\lambda_0 + \tau} \setminus \mathcal{K}} \left( |\nabla u|+ |\nabla u_{\lambda}|\right)^{p-2}|\nabla w_{\lambda}^+|^2 \, \d x.
\end{split}
\end{equation}
For $\bar{\tau}$
	small and $\mathcal{K}$ large, we may assume that
	$$C |\Omega_{\lambda_0 + \tau} \setminus \mathcal{K}|^{\tfrac{p^{\ast}-2}{p^{\ast}}} < 1.$$
	We can then deduce that
	\begin{equation*}
	\int_{\Omega_{\lambda_0 + \tau} \setminus \mathcal{K}}(w_{\lambda}^+)^2 \, \d x = \int_{\Omega_{\lambda_0 + \tau}}(w_{\lambda}^+)^2 \, \d x = 0
	\end{equation*}
	proving that $u \leq u_{\lambda_0+\tau}$ in $\Omega_{\lambda_0 +
		\tau} \setminus R_{\lambda_0 + \tau} (\Gamma)$ for any $0 < \tau <
	\bar{\tau}$ for some small $\bar{\tau}>0$. Such a contradiction
	shows that
	$$ \lambda_0 = 0.$$
	Since the moving plane procedure can be performed in the same way
	but in the opposite direction, then this proves the desired symmetry
	result. The fact that the solution is increasing in the
	$x_1$-direction in $\{x_1 < 0\}$ is implicit in the moving plane
	procedure.	
  \end{proof}
  \begin{rem} \label{rem:casogeneraleDett}
  By carefully scrutinizing the proof of Theorem \ref{thm:symmetry},
  one can easily see that the key point in all the argument is
  estimate \eqref{eq:estimkey}, \emph{which only depends on $p$}.
  For this reason, we can prove Theorem \ref{thm:symmetry} in the general
  case
  $$\frac{2N}{N+2}<p\leq q$$
  by considering the following facts.
  \begin{itemize}
   \item[(a)] 
   When $\frac{2N}{N+2}<p<2$ and $q\geq 2$ one can argue \emph{exactly as we did in this paper},
   up to modifying the proof of Lemma \ref{leaiuto} to cover the case $q\geq 2$;
   this can be done by adapting the argument exploited
   in the proof
   of \cite[Lemma 2.4]{BEV}.
   \medskip
   
   \item[(b)] When $q\geq p\geq 2$, one can proceed exactly as in
   \cite{BEV}, up to slightly modifying the proof of \cite[Lemma 2.4]{BEV}
   to take into account the weaker assumption \eqref{eq:assCappqzero};
   this can be done by using the same approach exploited
   in the proof of Lemma \ref{leaiuto}.
  \end{itemize}
  \end{rem}

%

%
%
%
%
%

\end{document}